\theoremstyle{definition}
\theoremstyle{plain}
\newtheorem{theorem}{Theorem}
\newtheorem{lemma}{Lemma}
\newtheorem{proposition}{Proposition}
\newtheorem{fact}{Fact}
\newcommand{\forae}{%
  \tikz[baseline={(forall.base)}]{
    \node[inner sep=0pt, outer sep=0pt] (forall) {$\forall$};
    \fill[white] (-0.06em,0.05ex) rectangle (0.06em,0.25ex);
    \draw[line width=0.04em] (-0.06em, 0.7ex) -- (0.06em, 0.7ex);
  }
}
\definecolor{shadethmcolor}{HTML}{FFFFFF}
\definecolor{shaderulecolor}{HTML}{000000}
\newcommand{\R}{\mathbb{R}}
\newcommand{\cp}{\overline{\partial}}
\newcommand{\N}{\mathbb{N}}
\def\p{^}
\newcommand{\co}{\mathrm{co}\hspace*{.3mm}}
\title{\LARGE Reachability of gradient descent}
\begin{document}

\author{\large C\'edric Josz\thanks{\url{cj2638@columbia.edu,wo2205@columbia.edu}, IEOR, Columbia University, New York.} \and Wenqing Ouyang\footnotemark[\value{footnote}]}
\date{}

\maketitle

\begin{center}
    \textbf{Abstract}
    \end{center}
    \vspace*{-3mm}
 \begin{adjustwidth}{0.2in}{0.2in}
 ~~~We show that gradient descent can converge to any local minimum of a smooth semi-algebraic function. This holds if the step sizes are nonsummable and sufficiently small. The same results hold for the subgradient method on locally Lipschitz semi-algebraic functions if the step size is constant.
 
\end{adjustwidth} 
\vspace*{3mm}
\noindent{\bf Keywords:} Gradient descent, Kurdyka-\L{}ojasiewicz inequality, semi-algebraic geometry.
\vspace*{3mm}

\noindent{\bf MSC 2020:} 14P10, 34A60, 49-XX.


\section{Introduction}
\label{sec:Introduction}

Gradient descent seeks to minimize a function $f:\R\p n \to \R$ by choosing an initial point $x_0\in \R\p n$ and iterating
 $$\forall k \in \N, ~~~  x_{k+1} = x_k - \alpha_k \nabla f(x_k),$$
 where $\alpha_0,\alpha_1,\alpha_2,\hdots >0$ are the step sizes. As objective functions nowadays often have multiple local minima, the question arises as to which minima gradient descent selects. This is considered one of the keys to understanding generalization in deep learning \cite[Chapter 12]{bach2024learning}. 
 
 While large step sizes may preclude convergence to certain local minima \cite{nar2018step,wu2018sgd,cohen2021gradient,bolte2025convergence}, it is not known whether gradient descent can converge to any local minimum with sufficiently small step sizes. It can converge to isolated local minima \cite[Theorem 3.2]{absil2005convergence} of real analytic functions with sufficiently small step sizes, and to isolated saddle points \cite[Theorem 1]{boumal2024} of smooth functions with sufficiently small constant step sizes. Due to symmetry \cite{josz2025subdifferentiation}, local minima in most problems of interest nowadays are however not isolated (for e.g., matrix factorization, neural networks, etc.). Also, it is desirable to allow for variable step sizes in practice, for example, to converge to flat minima \cite{josz2023lyapunov}. 
 
 In this note, we show that for smooth functions that are semi-algebraic, or more generally definable in an o-minimal structure on the real field \cite{van1998tame}, the only requirements are that the step sizes be sufficiently small and nonsummable. As usual, $C^{1,1}_L$ denotes the set of continuously differentiable functions whose gradient is Lipschitz continuous with modulus $L$. Formally, we have:

\begin{theorem}
\label{thm:attain_discrete}
     Let $f:\R\p n \to \R$ be $C_L\p {1,1}$ definable and $\overline{x} \in \R\p n$ be a critical point of $f$ that is not a local maximum of $f$. Then
     $$\forall \epsilon>0, ~\exists \eta >0: ~\forall \{\alpha_k\}_{k\in \N}\subseteq (0,1/L],~ \sum_{k=0}\p {\infty} \alpha_k = \infty \implies \exists x_0 \in S_\eta(\overline{x}):~ B_\epsilon(\overline{x}) \ni x_k \to \overline{x}$$
where $\{x_k\}_{k\in\N}$ is defined by $x_{k+1} = x_k - \alpha_k \nabla f(x_k)$ for all $k\in\N$.
\end{theorem}

Our proof overcomes several hurdles compared with \cite[Theorem 1]{boumal2024}:
 \begin{enumerate}[label=\rm{(\rm{\roman*})}]
     \item In order to relax the isolatedness assumption, we harness the Lyapunov stability property of gradient descent inherited from the Kurdyka-\L{}ojasiewicz inequality \cite{kurdyka1998gradients}. When $\overline{x}$ is a local minimum, it reads
     $$\forall \epsilon>0,~\exists \delta,\overline{\alpha}>0:~ x_0\in B_\delta(\overline{x}) ~~\implies ~~ \{x_k\}_{k\in\N}\subseteq B_\epsilon(\overline{x})$$
     for some step sizes $\{\alpha_k\}_{k\in\N}\subseteq (0,\overline{\alpha}]$.
     \item In order to handle variable step sizes, we 
     use the curve selection lemma \cite[4.6]{van1996geometric} and the monotonicity lemma \cite[4.1]{van1996geometric}. The first step of the proof of \cref{thm:attain_discrete} already deviates from \cite[Theorem 1]{boumal2024}, as we need to carefully choose a sequence $\{a_i\}_{i\in \N}\subseteq\R\p n$ such that $$a_i\to \overline{x} ~~~\text{and}~~~ f(a_i)>f(\overline{x})$$
     instead of making an arbitrary choice. We choose it along an arbitrary continuous curve $\gamma:[0,\overline{t}]\to\R\p n$ such that
     $$\lim_{t\downarrow 0} \gamma(t) =\overline{x} ~~~\text{and}~~~ f(\gamma(t))>f(\overline{x}).$$
 \end{enumerate}

\cref{thm:attain_discrete} admits the following nonsmooth analogue.
\begin{theorem}
\label{thm:attain_nonsmooth}
    Let $f:\R\p n\to \R$ be locally Lipschitz definable and $\overline{x}\in \R\p n$ be Clarke critical. Then
    $$\exists\overline{\alpha}>0:\forall \alpha \in(0,\overline{\alpha}],\exists x_0\in\R\p n\setminus\{\overline{x}\}: x_k\to \overline{x}$$
    for any sequence $\{x_k\}_{k\in\N}\subseteq\R\p n$ such that $x_{k+1}=x_k-\alpha g_k$ for all $k\in\N$ and some $g_k\in \overline{\partial} f(x_k)$.
\end{theorem}

In continuous-time gradient dynamics, \cite[Theorem 3]{moussu1997dynamique} shows that one can converge to any critical point of a real analytic function that is not a local maximum (see also \cite[Proposition 3]{cris2024}). \cite[Section 4]{nowel2002trajectories} proves the same result when the function is simply smooth and the critical point is isolated. \cite{szafraniec2021stable} provides sufficient conditions to ensure the number of corresponding trajectories is infinite. For the sake of completeness, we will also analyze reachability in continuous-time subgradient dynamics for locally Lipschitz definable functions.

This note is organized as follows. The notations are introduced in \cref{sec:Notations}. \cref{sec:Main body} contains the main results. For pedagogical purposes, we begin with the simpler continuous-time dynamics in \cref{subsec:Continuous time}, then analyze its discrete-time counterpart in \cref{subsec:Discrete time}.

\section{Notations}
\label{sec:Notations}

As usual, $\mathbb{N} = \{0,1,\hdots\}$ and $\mathbb{R}_+ = [0,\infty)$.
Given two integers $a \leq  b$, we use the notation 
\begin{equation*}
    \llbracket a,b \rrbracket = \{a,a+1,\hdots,b\}.    
\end{equation*}
The Euclidean norm is denoted by $|\cdot|=\sqrt{\langle\cdot,\cdot\rangle}$. Also, $B_r(x)$ and $\overline{B}_r(x)$ denote the open ball and the closed ball of radius $r$ centered at $x$, respectively.
The sphere of radius $r$ centered at $x$ is denoted by $S_r(x)=\overline{B}_r(x)\setminus B_r(x)$. The indicator function of a set $C$ is denoted by $\delta_C$. The distance from a point $x\in \R^n$ to a set $A\subseteq \R^n$ is denoted by $$d(x,A)=\inf_{a\in A}|x-a| .$$ Let $\co A$ denote the convex hull of $A\subseteq \R\p n$. For a sequence $\{x_k\}_{k\in \mathbb N}$ and $I \subseteq\N$, we denote $\{x_i\}_{i\in I}$ by $x_I$. A set-valued mapping $F:\R^n\rightrightarrows \R^m$ is upper semicontinuous at $\overline{x}\in\R\p n$ if for any neighborhood $V$ of $F(\overline{x})$, there exists a neighborhood $U$ of $\overline{x}$ such that $F(U)\subseteq V$. 

Given a locally Lipschitz function $f:\mathbb{R}^n\rightarrow \mathbb{R}$, its Clarke subdifferential is defined as
$$\overline{\partial} f(\overline{x}) = \co \{ v \in \mathbb{R}^n : \exists x_k \xrightarrow[\Omega]{} \overline{x} ~\text{with}~ \nabla f(x_k) \rightarrow v\}$$
where $\Omega$ are the differentiable points of $f$ and the letter $\Omega$ under the arrow means $x_k \in \Omega$ and $x_k \rightarrow \overline{x}$.
For a $C^1$ function, its Clarke subdifferential and its gradient coincide \cite[Proposition 2.2.4]{clarke1990}. Moreover, the Clarke subdifferential of a locally Lipschitz function is locally bounded, compact convex valued and upper semicontinuous \cite[Proposition 2.1.2(a) and 2.1.5(d)]{clarke1990}. A point $x\in\R\p n$ is Clarke critical if $0\in \overline{\partial}f(x)$. A scalar $\ell$ is a Clarke critical value if there exists a Clarke critical point $x$ such that $f(x)=\ell$.
The set of Clarke critical values of a locally Lipschitz definable function is finite \cite[Corollary 9]{bolte2007clarke}.

A trajectory of $F:\R\p n \rightrightarrows\R\p n$ is an absolutely continuous curve $x:I\to\R\p n$ where $I$ is an interval of $\R$ such that
$$\forae t\in I, ~~~ x'(t) \in F(x(t)),$$
where $\forae$ stands for `almost everywhere'. We refer to solutions to the differential inclusion $\dot{x}\in F(x)$ as trajectories of $F$ over an interval $I = [0,T)$ for some $T\in(0,\infty]$ or $I = [0,T]$ for some $T\in(0,\infty)$. A solution $x:I\to\R\p n$ is maximal if for any other solution $y:J\to\R\p n$ such that $I\subseteq J$ and $x(t)=y(t)$ for all $t\in I$, we have $I=J$. A solution is globally defined if $I = [0,\infty)$.

\section{Main body}
\label{sec:Main body}

We begin with continuous-time dynamics, then consider discrete-time dynamics. We'll be using the following well-known fact, proved in the Appendix for convenience.

\begin{fact}
    \label{fact:length}
    Let $f:\R\p n \to \R$ be locally Lipschitz. The following hold: 
    \begin{enumerate}[label=\rm{(\rm{\roman*})}]
        \item If $x:[0,T)\to\R\p n$ is a maximal solution to 
        $$\forae t\in(0,T), ~~~ x'(t)\in -\overline{\partial}f(x(t)),$$
        such that 
        $\int_0\p T|x'(t)|dt<\infty$,
        then $T=\infty$ and $x(\cdot)$ converges to a Clarke critical point of $f$. \label{item:continuous}
        \item If $\{\alpha_k\}_{k\in\N}\subseteq(0,\infty)$ and $\{x_k\}_{k\in\N}\subseteq \R\p n$ are such that
        $$\forall k \in \N, ~ x_{k+1} = x_k - \alpha_k \overline{\partial} f(x_k),~~~ \sum_{k=0}\p \infty \alpha_k = \infty, ~~~\text{and}~~~\sum_{k=0}\p \infty |x_{k+1}-x_k|<\infty,$$
    then $x_k$ converges to a Clarke critical point of $f$. \label{item:discrete}
    \end{enumerate}
\end{fact}

\subsection{Continuous time}
\label{subsec:Continuous time}

The following stability result originates in \L{}ojasiewicz' work \cite{lojasiewicz1984trajectoires}. We do not claim any novelty as it is a simple generalization of \cite[Theorem 3]{absil2006stable} for real analytic functions. We propose a different proof however, using Kurdyka's length formula \cite[Theorem 2]{kurdyka1998gradients}.


\begin{lemma}
    \label{fact:stable}
Let $\overline{x}\in \R\p n$ be a local minimum of a locally Lipschitz definable function $f:\R\p n \to \R$. For all $\epsilon>0$, there exists $\delta>0$ such that, for all $x_0 \in B_\delta(\overline{x})$, there exists a solution to 
\begin{equation}
        \label{eq:DI}
    \left\{ \begin{array}{ccc}
        \dot{x} & \in & -\overline{\partial} f(x),  \\
        x(0) & = & x_0,
    \end{array} \right.
    \end{equation}
    and any solution to \eqref{eq:DI} is globally defined, takes values in $B_\epsilon(\overline{x})$, and converges to a local minimum of $f$.
\end{lemma}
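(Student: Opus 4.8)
The plan is to run the classical \L{}ojasiewicz trapping argument in the nonsmooth setting, using the Kurdyka--\L{}ojasiewicz (KL) inequality for definable functions as the main analytic engine. Translating, assume $f(\overline{x})=0$, so by local minimality $f\ge 0$ on some closed ball $\overline{B}_{\epsilon_1}(\overline{x})$; after shrinking $\epsilon_1$ I may further assume $\overline{x}$ is a minimum on all of $\overline{B}_{\epsilon_1}(\overline{x})$. I would first assemble the three facts about a definable locally Lipschitz $f$ that the argument consumes: (a) the map $x\mapsto-\overline{\partial} f(x)$ has nonempty, convex, compact values and is upper semicontinuous, so \eqref{eq:DI} has a local absolutely continuous solution by the standard existence theorem for differential inclusions (Aubin--Cellina); (b) the chain rule along solutions: if $x(\cdot)$ solves \eqref{eq:DI} then $t\mapsto f(x(t))$ is a.e.\ differentiable with $\tfrac{d}{dt}f(x(t))=\langle v,\dot{x}(t)\rangle$ for every $v\in\overline{\partial} f(x(t))$, hence, choosing $v=-\dot{x}(t)$, $\tfrac{d}{dt}f(x(t))=-\|\dot{x}(t)\|^2\le 0$; and (c) the KL inequality: after possibly shrinking $\epsilon_1$ there are $\eta>0$ and a continuous increasing $\psi\colon[0,\eta)\to[0,\infty)$, $C^1$ on $(0,\eta)$, with $\psi(0)=0$ and $\psi'>0$, such that $\psi'(f(x))\,\dist(0,\overline{\partial} f(x))\ge 1$ for all $x\in B_{\epsilon_1}(\overline{x})$ with $0<f(x)<\eta$ (if $f\equiv 0$ near $\overline{x}$ then $\overline{\partial} f\equiv\{0\}$ there, every solution is constant, and the claim is immediate).

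The core estimate is standard: as long as $x(t)\in B_{\epsilon_1}(\overline{x})$ and $f(x(t))>0$, combining (b), (c) and $\|\dot{x}(t)\|\ge\dist(0,\overline{\partial} f(x(t)))$ (valid since $-\dot{x}(t)\in\overline{\partial} f(x(t))$) gives $-\tfrac{d}{dt}\psi(f(x(t)))=\psi'(f(x(t)))\|\dot{x}(t)\|^2\ge\|\dot{x}(t)\|$, so integration bounds the arclength over any such interval by $\psi(f(x_0))$. Since $f(x(\cdot))$ is nonincreasing and $\ge 0$, its zero set is a terminal interval on which, by (b), $\dot{x}=0$ a.e.; hence the arclength bound persists past the moment $f(x(\cdot))$ hits $0$, giving $\int_0^T\|\dot{x}\|\le\psi(f(x_0))$ whenever $x([0,T])\subset B_{\epsilon_1}(\overline{x})$. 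Now fix $\epsilon$ with $\overline{B}_\epsilon(\overline{x})\subset B_{\epsilon_1}(\overline{x})$ and choose $\delta\in(0,\epsilon)$ so small that $\sup_{B_\delta(\overline{x})}f<\eta$ and $\delta+\psi\big(\sup_{B_\delta(\overline{x})}f\big)<\epsilon$. If a maximal solution from $x_0\in B_\delta(\overline{x})$ first reached $\|x(T)-\overline{x}\|=\epsilon$ at time $T$, then on $[0,T]$ the arclength bound applies and $\|x(T)-x_0\|\le\psi(f(x_0))<\epsilon-\delta$, contradicting $\|x(T)-x_0\|\ge\epsilon-\delta$; hence the solution stays in $B_\epsilon(\overline{x})$. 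Its total length is then $\le\psi(f(x_0))<\infty$, which forces the maximal time to be $+\infty$ (a finite-length trajectory confined to a compact set has a limit at its terminal time and could be prolonged) and forces $x(t)\to x^\ast$ for some $x^\ast\in\overline{B}_\epsilon(\overline{x})$.

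It remains to see that $x^\ast$ is a local minimum. From $\int_0^\infty\|\dot{x}\|<\infty$ pick $t_k\to\infty$ with $\dot{x}(t_k)\to 0$; then $x(t_k)\to x^\ast$, $-\dot{x}(t_k)\in\overline{\partial} f(x(t_k))$, and closedness of the graph of $\overline{\partial} f$ give $0\in\overline{\partial} f(x^\ast)$, so $x^\ast$ is Clarke critical. To rule out critical points that are not minimizers, I would invoke the nonsmooth Sard theorem for definable functions: the set of Clarke-critical values of $f$ in $\overline{B}_{\epsilon_1}(\overline{x})$ is finite, so, using $f(\overline{x})=0$ and continuity, after shrinking $\epsilon$ once more I may assume $f<v_1$ on $\overline{B}_\epsilon(\overline{x})$, where $v_1$ is the smallest Clarke-critical value exceeding $0$ (if none exists this step is vacuous). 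Then $x^\ast$ is critical with $f(x^\ast)<v_1$, hence $f(x^\ast)=0$; since $f\ge 0=f(x^\ast)$ on the neighborhood $B_{\epsilon_1}(\overline{x})$ of $x^\ast$, the point $x^\ast$ is a local minimum of $f$.

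I expect the main difficulty to be the two imported ingredients that drive the estimate --- the chain rule $\tfrac{d}{dt}f(x(t))=-\|\dot{x}(t)\|^2$ along solutions and the KL inequality phrased with the Clarke subdifferential --- both of which rest on the Whitney-stratification results for definable sets; a genuinely self-contained proof has to develop or carefully cite these. By comparison, the trapping, the global existence, the convergence, and the Sard-based upgrade of the limit to a local minimum are routine bookkeeping.
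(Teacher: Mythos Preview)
Your proposal is correct and follows essentially the same route as the paper: existence for the differential inclusion, the chain rule $(f\circ x)'=-|x'|^2$, the KL/\L{}ojasiewicz desingularization to bound arclength by $\psi(f(x_0)-f(\overline{x}))$, a trapping argument via the choice $\delta+\psi(\sup_{B_\delta}f-f(\overline{x}))<\epsilon$, and the definable Morse--Sard theorem to isolate $f(\overline{x})$ as the unique critical value nearby. If anything, you are more explicit than the paper's own proof, which states the arclength bound as a quotation from \cite[Proposition~7]{josz2023global} and stops after establishing that solutions stay in $B_\epsilon(\overline{x})$ and are globally defined, whereas you spell out the finite-length $\Rightarrow$ convergence step and the identification of the limit as a local minimum.
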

\begin{proof}
    Let $\epsilon>0$ such that $f(x) \geq f(\overline{x})$ for all $x\in \overline{B}_\epsilon(\overline{x})$ and $f(\overline{x})$ is the sole Clarke critical value of $f$ reached in $\overline{B}_\epsilon(\overline{x})$, the set of which is finite by the definable Morse-Sard theorem \cite[Corollary 9]{bolte2007clarke}. Since $f$ is locally Lipschitz, by \cite[Proposition 1]{josz2023global} the differential inclusion \eqref{eq:DI} admits a local solution for all $x_0 \in B_\epsilon(\overline{x})$. By \cite[Proposition 7]{josz2023global} (see also \cite[Theorem 2]{kurdyka1998gradients}), there exists a concave definable diffeomorphism $\psi:\mathbb{R}_+ \rightarrow \mathbb{R}_+$ such that
    \begin{equation*}
        \int_0^T |x'(t)|dt \leq \psi(f(x(0))-f(x(T)))
    \end{equation*}
    for any solution $x:[0,T]\rightarrow B_\epsilon(\overline{x})$ to \eqref{eq:DI} with $T>0$. Since $f$ is Lipschitz continuous and definable, by the chain rule \cite[Corollary 5.4]{drusvyatskiy2015curves} (see also \cite[Lemma 5.2]{davis2020stochastic}) we have
\begin{equation*}
\forae t \in (0,T),~\forall v\in \overline{\partial} f(x(t)), ~~~ (f\circ x)'(t) = \langle v , x'(t) \rangle = -|x'(t)|^2.
\end{equation*} 
By continuity, there exists $\delta \in (0,\epsilon)$ such that $\sup \{ f(x) - f(\overline{x}) : x \in B_\delta(\overline{x})\} \leq \psi^{-1}(\epsilon - \delta)$. For any solution to \eqref{eq:DI} initialized in $B_\delta(\overline{x})$, if $T=\inf\{t \geq 0: x(t) \notin B_\epsilon(\overline{x})\}<\infty$, then 
    \begin{equation*}
        \epsilon - \delta < |x(T)-x(0)| \leq \int_0^T|x'(t)|dt \leq \psi(f(x(0))-f(x(T))) \leq \psi(f(x(0))-f(\overline{x})) \leq \epsilon - \delta
    \end{equation*}
    as $f(\overline{x}) \leq f(x(t)) \leq f(x(0))$ for all $t\in [0,T]$. Hence $T=\infty$, so that any solution to \eqref{eq:DI} initialized in $B_\delta(\overline{x})$ satisfies
    $$\forall t\geq 0, ~ x(t)\in B_\epsilon(\overline{x}) ~~~\text{and}~~~ \int_0\p \infty|x'(t)|dt \leq \psi(f(x(0))-f(\overline{x})).$$
    One now concludes by \cref{fact:length}.
\end{proof}

In order to analyze reachability, we propose to use the reverse differential inclusion.

\begin{lemma}
\label{lemma:reversedi}
    Let $f:\R^n\to \R$ be locally Lipschitz definable, $\overline{x}\in\R\p n$, and $\delta>0$ be such that $(f(\overline{x}),\infty)$ is devoid of critical values of $f$ reached in  $\overline{B}_\delta(\overline x)$. For all $x_0\in B_\delta(\overline{x})$, if $f(x_0)>f(\overline{x})$, then there exists a solution $x:[0,T]\to \R^n$ to the reverse differential inclusion
    \begin{equation}
        \label{eq:DI+}
    \left\{ \begin{array}{ccc}
        \dot{x} & \in & \overline{\partial} f(x),  \\
        x(0) & = & x_0.
    \end{array} \right.
    \end{equation}
    such that $x[0,T)\subseteq B_\delta(\overline{x})$ and $x(T)\in S_\delta(\overline{x})$.
\end{lemma}
\begin{proof}
    Since the Clarke subdifferential of a locally Lipschitz function is upper semicontinuous, locally bounded, and compact convex-valued \cite{clarke1990}, there exists an absolutely continuous solution $x:[0,T)\to \R^n$ such that either $T=\infty$ and $x([0,\infty))\subseteq B_\delta(\overline{x})$, or $T<\infty$, $x([0,T))\subseteq B_\delta(\overline{x}) 
    $, and $x(T)=\lim_{t\to T} x(t) \in S_\delta(\overline{x})$ by \cite[Chapter 2, Theorems 3 and 4]{aubin1984differential}. Alternatively, one can also use the same argument as in \cref{fact:stable} to show the existence of such a solution. In either cases, by the chain rule \cite[Corollary 5.4]{drusvyatskiy2015curves} (see also \cite[Lemma 5.2]{davis2020stochastic}) we have
\begin{equation*}
\forae t \in (0,T),~\forall v\in \overline{\partial} f(x(t)), ~~~ (f\circ x)'(t) = \langle v , x'(t) \rangle =  |x'(t)|^2.
\end{equation*} 
Let $$\zeta = \inf \{d(0,\overline{\partial}f(x)) : x \in B_\delta(\overline{x}), f(x) \geq f(x_0)\}.$$ It is positive since $f$ does not reach any Clarke critical values among $[f(x_0),\infty)$ in $\overline{B}_\delta(\overline{x})$. If it were zero, one would be reach a contradiction by upper semicontinuity of $\overline{\partial} f$. For any solution to \eqref{eq:DI+}, if $T=\infty$, then  
    $$f(x(t))-f(x(0)) \geq \int_0\p t |\overline{\partial} f(x(s))|\p 2 ds \geq \zeta\p2 t \to \infty,$$
    a contradiction. Therefore $T<\infty$, and hence our proof is done.
\end{proof}

We are now ready to state the reachability result in continuous-time.

\begin{proposition}
\label{prop:attain}
     Let $f:\R\p n \to \R$ be definable and locally Lipschitz. If $\overline{x} \in \R\p n$ is a critical point of $f$ but is not a local maximum of $f$, then there exists $x_0\neq \overline{x}$ such that the differential inclusion 
    \begin{equation}
        \label{eq:DI-}
    \left\{ \begin{array}{ccc}
        \dot{x} & \in & -\overline{\partial} f(x),  \\
        x(0) & = & x_0,
    \end{array} \right.
    \end{equation}
    has a global solution converging to $\overline{x}$.
\end{proposition}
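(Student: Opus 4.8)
The plan is to build $x_0$ by reversing the dynamics near $\overline x$. Time‑reversing a trajectory that converges to $\overline x$ is a gradient‑\emph{ascent} trajectory emanating from $\overline x$; since $\overline x$ is a Clarke critical point (any global solution converging to it forces $0\in\overline\partial f(\overline x)$), there need not be a nonconstant ascent trajectory from $\overline x$ itself, so instead I would run the ascent dynamics from a sequence $z_k\to\overline x$, stop each ascent trajectory the first time it meets a fixed small sphere about $\overline x$, and take a subsequential limit $x_0$ of the hitting points. The bulk of the work is to show that the forward trajectory from this $x_0$ really converges to $\overline x$ and not merely to some nearby critical point.

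In more detail, first fix $\epsilon_0>0$ small enough that $f(\overline x)$ is the only Clarke critical value of $f$ in $\overline B_{\epsilon_0}(\overline x)$ (definable Morse--Sard, exactly as in the proof of \Cref{fact:stable}) and small enough that \Cref{fact:stable} applies with this $\epsilon_0$, giving $\delta_0\in(0,\epsilon_0)$ such that any solution of \eqref{eq:DI} started in $B_{\delta_0}(\overline x)$ stays in $B_{\epsilon_0}(\overline x)$ and converges to a Clarke critical point, necessarily at level $f(\overline x)$. Since $\overline x$ is not a local maximum, pick $z_k\to\overline x$ with $f(z_k)>f(\overline x)$. Applying \cite[Proposition~1]{josz2023global} to $-f$ yields a solution of $\dot y\in\overline\partial f(y)$, $y(0)=z_k$, along which the chain rule gives $(f\circ y)'=|y'|^2\ge 0$ and \cite[Proposition~7]{josz2023global} applied to $-f$ bounds its length inside $\overline B_{\epsilon_0}(\overline x)$ by a constant independent of $k$. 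Because $f$ stays $\ge f(z_k)>f(\overline x)$ along this curve, it cannot converge inside $\overline B_{\epsilon_0}(\overline x)$ (its limit would be a critical point at level $>f(\overline x)$), so by \cite[Proposition~2]{josz2023global} it must reach $\partial B_{\epsilon_0}(\overline x)$ at some finite time $s_k$, at a point $w_k$. Time‑reversing this ascent trajectory on $[0,s_k]$ produces a forward solution of \eqref{eq:DI} from $w_k$ to $z_k$; concatenating it with the forward solution of \eqref{eq:DI} from $z_k$ gives $X_k\colon[0,\infty)\to\overline B_{\epsilon_0}(\overline x)$ with $X_k(0)=w_k$, $X_k(s_k)=z_k$, which by \Cref{fact:stable} (applicable once $z_k\in B_{\delta_0}(\overline x)$) remains in $B_{\epsilon_0}(\overline x)$ for $t\ge s_k$ and converges as $t\to\infty$ to a critical point at level $f(\overline x)$.

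Now pass to the limit. Since the $w_k$ lie on the compact sphere $\partial B_{\epsilon_0}(\overline x)$, a subsequence converges to some $x_0$ with $|x_0-\overline x|=\epsilon_0$, so $x_0\neq\overline x$. By the standard compactness of the solution set of the Clarke differential inclusion on compact time intervals, a further subsequence of the $X_k$ converges locally uniformly to a solution $x(\cdot)$ with $x(0)=x_0$; it stays in $\overline B_{\epsilon_0}(\overline x)$, hence has finite length and converges to a critical point $\ell$ with $f(\ell)=f(\overline x)$. To identify $\ell$ with $\overline x$: the tail of $X_k$ after time $s_k$ is a forward trajectory from $z_k$, so by \cite[Proposition~7]{josz2023global} it has length at most $\psi\big(f(z_k)-f(\overline x)\big)\to 0$; combining local uniform convergence on $[0,T]$ with the same length estimate applied to $X_k$ on $[T,\infty)$, and letting $k\to\infty$, yields $|x(T)-\overline x|\le\psi\big(f(x(T))-f(\overline x)\big)$ for every $T\ge 0$ (the subcase where $s_k$ stays bounded is handled directly, since then $x$ passes through $\overline x$). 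Letting $T\to\infty$ and using $f(x(T))\to f(\ell)=f(\overline x)$ forces $\ell=\overline x$, so $x(\cdot)$ is a global solution from $x_0\neq\overline x$ converging to $\overline x$.

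The step I expect to be the main obstacle is this last one. The ascent time $s_k$ needed for the backward trajectory to escape to $\partial B_{\epsilon_0}(\overline x)$ will in general diverge as $z_k\to\overline x$ (already for $f(x,y)=x^2$ at the origin), so one cannot simply invoke continuous dependence on initial conditions over the horizon $[0,s_k]$. The remedy is to measure progress by $f$‑decrease rather than by time: the backward paths have uniformly bounded length, and \cite[Proposition~7]{josz2023global} controls the tail of the forward trajectory once it has returned near $\overline x$, and it is these two a priori length bounds that make the subsequential limit behave. It is worth noting where the hypotheses enter: ``$\overline x$ is not a local maximum'' is exactly what supplies the points $z_k$ with $f(z_k)>f(\overline x)$ that force the backward trajectory to leave the small ball, while the containment from \Cref{fact:stable} (together with the choice of $\epsilon_0$ isolating the critical value $f(\overline x)$) is what keeps every relevant trajectory inside the compact ball $\overline B_{\epsilon_0}(\overline x)$.
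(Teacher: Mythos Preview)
Your overall strategy matches the paper's: run the ascent dynamics from points $z_k\to\overline x$ with $f(z_k)>f(\overline x)$, stop each ascent when it hits a small sphere about $\overline x$, extract a subsequential limit $x_0$ of the exit points, and show by compactness of the solution set that a forward trajectory from $x_0$ converges to $\overline x$. Your identification step $\ell=\overline x$ is a pleasant variant of what the paper does: you bound $|x(T)-\overline x|$ directly by the Kurdyka--\L{}ojasiewicz length estimate $\psi\big(f(x(T))-f(\overline x)\big)$ and let $T\to\infty$, whereas the paper instead applies \Cref{fact:stable} at a sequence of scales $\epsilon_i\searrow 0$ (once centred at $\overline x$, once at $x(\infty)$) to show simultaneously $X_k(\infty)\to\overline x$ and $X_k(\infty)\to x(\infty)$, forcing $x(\infty)=\overline x$.

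The genuine gap is that your argument tacitly assumes $\overline x$ is a \emph{local minimum} of $f$, while the proposition only assumes it is not a local maximum. You invoke \Cref{fact:stable} to keep the forward tail from $z_k$ inside $B_{\epsilon_0}(\overline x)$, but that lemma has ``$\overline x$ a local minimum'' as a hypothesis; and your inequality $|x(T)-\overline x|\le\psi\big(f(x(T))-f(\overline x)\big)$ requires $f(x(T))\ge f(\overline x)$ for the right-hand side even to be defined, which again uses that $\overline x$ minimises $f$ on the ball. The paper treats this as a separate reduction in its final paragraph, which your plan omits: replace $f$ by the definable locally Lipschitz function $g:=\max\{f,f(\overline x)\}$, observe that $\overline x$ is automatically a local minimum of $g$ (and still not a local maximum, since $g(z_k)>g(\overline x)$), apply the already-established local-minimum case to $g$ to produce a trajectory of $\dot x\in-\overline\partial g(x)$ from some $x_0\neq\overline x$ converging to $\overline x$, and then argue that this curve also solves $\dot x\in-\overline\partial f(x)$ because along it $g>f(\overline x)$ (hence $g=f$ and $\overline\partial g=\overline\partial f$) until it reaches $\overline x$, after which it stalls.
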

\begin{proof}
    We first consider the special case where $\overline{x}$ is a local minimum of $f$. Let $\epsilon>0$ be such that $f(x) \geq f(\overline{x})$ for all $x \in B_\epsilon(\overline{x})$, and $f(\overline{x})$ is the sole Clarke critical value of $f$ reached in $\overline{B}_\epsilon(\overline{x})$. Let $\delta >0$ be given by \cref{fact:stable}. Since $\overline{x}$ is not a local maximum of $f$, there exists $B_\delta(\overline{x}) \ni a_k \to \overline{x}$ such that $f(a_k) > f(\overline{x})$. By \cref{lemma:reversedi}, the reverse differential inclusion starting from $a_k$ admits a solution $y_k:[0,T_k]\to \overline{B}_{\delta}(\overline{x})$ such that $y_k((0,T_k))\subseteq B_\delta(\overline{x})$ and $y(T_k)\in S_\delta(\overline{x})$. By taking a subsequence if necessary, we may assume $b_k\to b_\infty\in S_\delta(\overline{x})$. In addition, by \cref{fact:stable} the differential inclusion \eqref{eq:DI} admits a global solution $z_k(\cdot):\R_+ \to B_\epsilon(\overline{x})$ when initialized at $a_k \in B_\delta(\overline{x})$. Moreover, by concatenating $y_k(T_k-\cdot)$ and $z_k$, we can get a global solution $x_k:\R_+\to \overline{B}_{\delta}(\overline{x})$ to \eqref{eq:DI} starting at $b_k\in S_\delta(\overline{x})$.  
    
Let $L = \sup \{ |v|: v \in \overline{\partial} f(x), x \in B_\epsilon(\overline{x})\}$. Since $|x_k'(t)| \leq L$ for all $k\in \N$ and almost every $t> 0$, successively applying the Arzel\`a-Ascoli and the Banach-Alaoglu theorems (see \cite[Theorem 4 p. 13]{aubin1984differential}) yields a subsequence (again denoted $x_k(\cdot)$) and an absolutely continuous function $x:\R_+\rightarrow \mathbb{R}^n$ such that $x_k(\cdot)$ converges uniformly to $x(\cdot)$ on compact intervals $I$ and $x_k'(\cdot)$ converges weakly to $x'(\cdot)$ in $L^1(I,\mathbb{R}^n)$. Since $x_k(\cdot)$ is a solution to \eqref{eq:DI+} for every $k \in \N$, $x(\cdot)$ is a solution to \eqref{eq:DI} with $x(0) = b_\infty$ by \cite[Convergence Theorem p. 60]{aubin1984differential}. \cref{fact:stable} implies that $x(\infty) = \lim_{t\to \infty} x(t)$ exists and is a local minimum of $f$.

Let $\epsilon_k \downarrow 0$ and $t_k$ be such that $x_k(t_k) = a_k$. By \cref{fact:stable}, there exists $\delta_k>0$ such that any solution to \eqref{eq:DI} initialized in $B_{\delta_k}(\overline{x})$ (respectively $B_{\delta_k}(x(\infty))$) remains in $B_{\epsilon_k}(\overline{x})$ (resp. $B_{\epsilon_k}(x(\infty))$) and converges. After taking a subsequence if necessary, we have $a_k \in B_{\delta_k}(\overline{x})$. Thus $x_k(\infty) = \lim x_k(t) \in B_{\epsilon_k}(\overline{x})$ and $x_k(\infty) \to \overline{x}$. 

Let $\tau_k$ be such that $|x(\tau_k)-x(\infty)|\leq \delta_k/2$. By uniform convergence, for all $\ell$ large enough, we have $|x_\ell(t)-x(t)|\leq \delta_k/2$ for all $t\in[0,\tau_k]$. Thus 
$$|x_\ell(\tau_k)-x(\infty)|\leq|x_\ell(\tau_k)-x(\tau_k)|+|x(\tau_k)-x(\infty)|\leq \delta_k/2+\delta_k/2 = \delta_k,$$
i.e., $x_\ell(\tau_k)\in B_{\delta_k}(x(\infty))$,
and $x_\ell(\infty) \in B_{\epsilon_k}(x(\infty))$. Hence $x_k(\infty) \to x(\infty)$ after taking a subsequence if necessary. We conclude that $\overline{x} = x(\infty)$. The desired result follows by taking $x_0 = b_\infty \neq \overline{x}$.

We now consider the general case where $\overline{x}$ is not necessarily a local minimum of $f$. Observe that $\overline{x}$ is a local minimum of the definable locally Lipschitz function $g = \max \{f,f(\overline{x})\}$ but not a local maximum of it. By the previous special case, there exists $x_0\neq \overline{x}$ such that the differential inclusion 
    $$
    \left\{ \begin{array}{ccc}
        \dot{x} & \in & -\overline{\partial} g(x)  \\
        x(0) & = & x_0
    \end{array} \right.
    $$
    has a global solution converging to $\overline{x}$. Since 
    $g(x(a)) - g(x(b)) = \int_a\p b |x'(t)|\p 2 dt$,
    as soon as $g(x(t))$ reaches $g(\overline{x})$ from above, $x(t)$ stalls at $\overline{x}$ for all future time. As $g(x) = f(x)$ when $g(x) > f(\overline{x})$ and $0 \in \overline{\partial} f(\overline{x})$, $x(\cdot)$ is a solution to \eqref{eq:DI-}. 
\end{proof}

\subsection{Discrete time}
\label{subsec:Discrete time}


We begin with a discrete length formula. It is a simpler version of \cite[Proposition 8]{josz2023global} which we prove using a small subset of its arguments. Similar results can of course be found in \cite{absil2005convergence,attouch2013convergence}.


\begin{lemma}
\label{lemma:length_discrete}
Let $f:\mathbb{R}^n \rightarrow \mathbb{R}$ be $C\p{1,1}_L$, $\overline{x}\in \R\p n$, $f(\overline x)=0$, and $r>0$. Suppose $\psi:\R_+\to\R_+$ is a concave diffeomorphism such that $|\nabla (\psi\circ f)(x)|\geq 1$ for all $x\in B_r(\overline{x})\cap [f> 0]$. Then, for all $K \in \mathbb{N}$ and $\alpha_0,\hdots,\alpha_K>0$ such that $\sup_{k\in\llbracket 0,K\rrbracket} \alpha_k < 2/L$ and $(x_0,\hdots,x_{K+1}) \in (B_r(\overline{x}) \cap [f\geq 0])\p {K+1}\times [f\geq 0]$ such that $x_{k+1} = x_k - \alpha_k \nabla f(x_k)$ for all $k \in\llbracket 0,K\rrbracket$, we have
\begin{equation}
\label{eq:length_discrete}
   \sum_{k=0}^K |x_{k+1}-x_k| ~\leq~ \frac{\psi(f(x_0)-f(x_{K+1}))}{1-L\sup_{k\in\llbracket 0,K\rrbracket} \alpha_k/2}
\end{equation}
and $f(x_0)\geq\cdots \geq f(x_{K+1})\geq 0$.
\end{lemma}
\begin{proof}
Since $f$ is $C\p {1,1}_L$, by \cite[Lemma 1.2.4]{nesterov2003introductory}, for all $k\in\llbracket 0,K\rrbracket$ we have
\begin{align*}
    f(x_{k+1})-f(x_k) & \leq \langle \nabla f(x_k), x_{k+1}-x_k\rangle + \frac{L}{2}|x_{k+1}-x_k|^2 \\
    & = \left(\frac{L\alpha_k}{2}-1\right) |x_{k+1}-x_k| |\nabla f(x_k)|
\end{align*}
and thus
\begin{equation}
\label{eq:descent_lemma_1}
    |x_{k+1}-x_k| |\nabla f(x_k)| \leq \frac{2}{2- L\alpha_k}(f(x_k) - f(x_{k+1})).
\end{equation}
If $\nabla f(x_k)\neq 0$, then $f(x_k)>f(x_{k+1})\geq \cdots \geq f(x_{K+1})\geq 0$ and $1 \leq | \nabla(\psi \circ f)(x_k)| = \psi'(f(x_k)) |\nabla f(x_k)|$. Multiplying \eqref{eq:descent_lemma_1} by $\psi'(f(x_k))$ and using the concavity of $\psi$ yields 
$$ |x_{k+1}-x_k| \leq \frac{\psi'(f(x_k))(f(x_k) - f(x_{k+1}))}{1-L\alpha_k/2} \leq \frac{\psi (f(x_k)) - \psi (f(x_{k+1}))}{1-L\alpha_k/2}.$$
If $\nabla f(x_k) = 0$, then $|x_{k+1}-x_k|=0$ so the above equation still holds. We obtain the telescoping sum 
$$ \sum_{k=0}^K |x_{k+1}-x_k| \leq \frac{\psi(f(x_0)) - \psi(f(x_{K+1}))
}{1-L\sup_{k\in \llbracket 0,K\rrbracket}\alpha_k/2} \leq \frac{\psi(f(x_0)-f(x_{K+1}))
}{1-L\sup_{k\in \llbracket 0,K\rrbracket}\alpha_k/2}$$
where the second inequality again uses concavity \cite[Lemma 3.5]{josz2023convergence}.
\end{proof}

We need a refined stability result compared to continous-time because we cannot assume local optimality. Indeed, the argument at the end of the proof of \cref{prop:attain}, which converts saddle points into local minima, breaks smoothness.
\begin{lemma}
    \label{lemma:stable_discrete}
Let $f:\mathbb{R}^n \rightarrow \mathbb{R}$ be $C\p{1,1}_L$ definable and $\overline{x}\in \R\p n$. For all $\epsilon>0$, there exists $\delta>0$ such that, if $\{\alpha_k\}_{k\in \N}\subseteq (0,\infty)$ and $\{x_k\}_{k\in\N}\subseteq\R\p n$ are such that
\begin{equation}
\label{eq:alpha_x}
 \sup_{k\in \N} \alpha_k < 2/L,~~~ \sum_{k=0}\p {\infty} \alpha_k = \infty,~~~ \forall k\in \N, ~ x_{k+1} = x_k - \alpha_k \nabla f(x_k),
\end{equation}
then
\begin{equation}
\label{eq:delta_epsilon}
x_0 \in B_\delta(\overline{x})\cap [f\geq f(\overline{x})] ~~~\implies ~~~ \forall k\in\llbracket 1,\overline{k}\rrbracket\setminus\{\infty\},~~~ x_k\in B_\epsilon(\overline{x})
\end{equation}
where $\overline{k}=\inf\{k\in \mathbb{N}:f(x_{k+1})< f(\overline x)\}$. In particular, if $\overline{k}=\infty$, then $x_k\to x_\infty$ where $\nabla f(x_\infty)=0$ and $f(x_\infty)=f(\overline{x})$.
\end{lemma}
\begin{proof}
    Without loss of generality, $f(\overline{x})=0$. Since $f$ is $C\p{1,1}_L$ definable, there exist $\psi:\R_+\to\R_+$ and $r>0$ such that the statement of \cref{lemma:length_discrete} holds. 
    Let $\epsilon\in (0,r)$.
    By continuity of $f$, there exists $\delta \in (0,\epsilon)$ such that  $$\sup_{B_\delta(\overline{x})} f \leq \psi^{-1}((\epsilon - \delta)/2).$$
    If $K = \inf\{ k\in \N: x_{k+1} \notin B_\epsilon(\overline{x}) \land k+1\leq \overline{k}\}<\infty$, then $x_0,\hdots,x_K \in B_\epsilon(\overline{x})\subseteq B_r(\overline{x})$, $f(x_0),\hdots,f(x_{K+1})\geq 0$, and \cref{lemma:length_discrete} yields the contradiction
    $$
        \epsilon - \delta < |x_{K+1}-x_0| \leq \sum_{k=0}\p {K} |x_{k+1}-x_k| \leq 2 \psi(f(x_0)-f(x_{K+1})) \leq 2\psi(f(x_0)) \leq \epsilon -\delta.
    $$
    Hence $K=\infty$, so that $x_k\in B_\epsilon(\overline{x})\subseteq B_r(\overline{x})$ for all $k\in \N$ with $k\leq \overline{k}$. If $\overline{k}=\infty$, then  
    $$ \sum_{k=0}\p \infty |x_{k+1}-x_k| \leq 2\psi(f(x_0)) <\infty$$
    so $x_k\to x_\infty \in B_r(\overline{x})$ with $\nabla f(x_\infty)= 0$ by \cref{fact:length}. If $f(x_\infty)>0$, then $0=|\nabla (\psi \circ f)(x_\infty)| = \psi'(f(x_\infty))|\nabla f(x_\infty)|\geq 1$, a contradiction.
\end{proof}

In order to reverse the dynamics, as in the proof of \cref{prop:attain}, we will rely on the following standard fact (see \cite[Theorem 4.4]{poliquin1996prox} or \cite[Theorem 13.37]{rockafellar2009variational}).

\begin{fact}
\label{fact:prox}
    Given a $C_L\p {1,1}$ function $f:\R\p n \to \R$ and a scalar $\lambda \in (0,1/L)$, consider the proximal mapping $P_\lambda f:\R\p n\rightrightarrows\R\p n$ defined by
    $$P_\lambda f(x) = \arg\min_{y \in \R\p n} f(y) + \frac{1}{2\lambda}|y-x|\p 2.$$
    Then $P_\lambda f$ is single-valued, Lipschitz continuous, and for any $x\in \R^n$ and $x\p + = P_\lambda f(x)$, we have
    $$x\p + = x - \lambda \nabla f(x\p +),~~~ f(x) - f(x\p +) \geq
    \frac{\lambda}{2}|\nabla f(x\p +)|\p 2, ~~\text{and}~~~
         |x\p + -x| \leq \frac{2\lambda}{1-L\lambda} |\nabla f(x)|. $$
\end{fact}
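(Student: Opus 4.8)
The plan is to establish one preliminary point --- that the infimum defining $P_\lambda f(x)$ is attained --- and then read off the three displayed assertions from the first-order optimality condition. For the preliminary point I would fix $x\in\R\p n$, write $h(y)=f(y)+\frac{1}{2\lambda}|y-x|\p2$, and use the $C_L\p{1,1}$ hypothesis in the form of the quadratic minorant $f(y)\ge f(x)+\langle\nabla f(x),y-x\rangle-\frac L2|y-x|\p2$, obtained by integrating the $L$-Lipschitz gradient along the segment $[x,y]$. This gives $h(y)\ge f(x)+\langle\nabla f(x),y-x\rangle+\frac12(\tfrac1\lambda-L)|y-x|\p2$; since $\lambda<1/L$ the quadratic coefficient is strictly positive, so $h$ is coercive, and being continuous it attains its infimum on $\R\p n$. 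Hence $P_\lambda f(x)\neq\emptyset$.

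Next, for a fixed $x\p+\in P_\lambda f(x)$: applying Fermat's rule to the differentiable function $h$ at its global minimizer $x\p+$ gives $0=\nabla f(x\p+)+\tfrac1\lambda(x\p+-x)$, i.e.\ $x\p+=x-\lambda\nabla f(x\p+)$, which is the first identity. For the second, I would simply compare the values $f(x\p+)+\frac{1}{2\lambda}|x\p+-x|\p2=h(x\p+)\le h(x)=f(x)$ and substitute $|x\p+-x|=\lambda|\nabla f(x\p+)|$ to obtain $f(x)-f(x\p+)\ge\frac\lambda2|\nabla f(x\p+)|\p2$. For the third, I would combine $x\p+-x=-\lambda\nabla f(x\p+)$ with the $L$-Lipschitz continuity of $\nabla f$: $|x\p+-x|=\lambda|\nabla f(x\p+)|\le\lambda|\nabla f(x)|+L\lambda|x\p+-x|$, and rearrange using $L\lambda<1$ to get $|x\p+-x|\le\frac{\lambda}{1-L\lambda}|\nabla f(x)|$ --- which is in fact slightly sharper than the stated bound $\frac{2\lambda}{1-L\lambda}|\nabla f(x)|$ and hence implies it.

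I do not expect any genuine obstacle: this is a standard fact and every step is a one-line computation. The only places that call for mild care are invoking $C_L\p{1,1}$ in the correct direction --- as a \emph{lower} bound on $f$, not the usual descent-lemma upper bound --- when proving coercivity, and using the strict inequality $\lambda<1/L$ exactly where positivity of $\tfrac1\lambda-L$ (equivalently $1-L\lambda$) is needed, namely in the existence step and in the final rearrangement.
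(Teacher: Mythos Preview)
Your proof is correct and follows essentially the same route as the paper: coercivity via the quadratic minorant from the $C_L^{1,1}$ hypothesis, the first identity from Fermat's rule, and the decrease from $h(x^+)\le h(x)$. The only difference is in the third estimate: the paper completes the square in the lower bound $h(y)-f(x)\ge \tfrac12(\lambda^{-1}-L)\bigl(|y-x-(\lambda^{-1}-L)^{-1}\nabla f(x)|^2-|(\lambda^{-1}-L)^{-1}\nabla f(x)|^2\bigr)$ and evaluates at $y=x^+$ (where the left side is $\le 0$) to get $|x^+-x|\le 2(\lambda^{-1}-L)^{-1}|\nabla f(x)|$, whereas you bypass this by combining the optimality condition with the Lipschitz inequality $|\nabla f(x^+)-\nabla f(x)|\le L|x^+-x|$ directly. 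Your route is shorter and, as you observe, yields the sharper constant $\lambda/(1-L\lambda)$; the paper's approach has the minor advantage of not re-invoking the Lipschitz hypothesis once the minorant is in hand, but this is purely cosmetic here.
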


We can now determine where gradient descent can converge to.

\begin{proof}[Proof of \cref{thm:attain_discrete}]
Since $\overline{x}$ is not a local maximum of $f$, $\overline{x}\in \overline{[f>f(\overline{x})]}$. By the curve selection lemma \cite[4.6]{van1996geometric}, there exists a definable continuous curve $\gamma:[0,\overline{t}]\to\R\p n$ with $\overline{t}>0$ such that $\lim_{t\downarrow 0} \gamma(t)=\overline{x}$ and $f(\gamma(t))>f(\overline{x})$. The monotonicity lemma \cite[4.1]{van1996geometric} implies that $f\circ \gamma$ is strictly increasing after possibly reducing $\overline{t}$.

Let $\epsilon>0$ be sufficiently small such that $f(\overline{x})$ is the sole critical value of $f$ reached in $\overline{B}_\epsilon(\overline{x})$. Let $\delta\in (0,\epsilon)$ be given by \cref{lemma:stable_discrete}, which ensures stability around $\overline{x}$. By continuity of $f$, there exists $\eta\in(0,\delta)$ such that
\begin{equation}
    \label{eq:eta}
    \max_{\overline{B}_\eta(\overline{x})} f < f(\gamma(\overline{t})).
\end{equation}
Let $\{\alpha_k\}_{k\in\N}\subseteq(0,1/L]$ be such that $\sum_{i=0}\p \infty \alpha_k = \infty$. We select $\{\alpha_k^i\}_{k\in \N}\subseteq (0,1/L)$ such that $\alpha_k^i\uparrow \alpha_k$ as $i\to\infty$ for all $k\in \N$, and $\sum_{k=0}^\infty \alpha_k^i=\infty$ for all $i\in \N$. Consider the family of functions $\{\omega_{i}\}_{i\in\N}$ defined by
    $$\omega_{i} = P_{\alpha_0^i}(-f)\circ \cdots \circ P_{\alpha_{i}^i}(-f).$$
Let $i\in\N$. By \cref{fact:prox}, for any $z\in \R\p n$, there exist unique points $x_0,\hdots,x_{i+1}\in \R\p n$ such that
    \begin{align*}
            \left\{
            \begin{array}{ccl}
                x_k &  =& \arg\max\limits_{y \in \R\p n} f(y) - |y-x_{k+1}|\p 2/(2\alpha_{k}^i), \\
                x_{i+1} & = & z,
            \end{array}
            \right.
    \end{align*}
    They satisfy
    $$\left\{
    \begin{array}{ccl}
        x_{k+1} & = & x_k - \alpha_k^i \nabla f(x_k), \\
        x_0 & = & \omega_i(z),
    \end{array}
    \right.$$
    $$f(x_k) - f(x_{k+1}) \geq
    \frac{\alpha_k^i}{2}|\nabla f(x_k)|\p 2,~~~\text{and}~~~
        |x_k -x_{k+1}| \leq \frac{2\alpha_k^i}{1-L\alpha_k^i}|\nabla f(x_{k+1})|.$$
In particular,
    \begin{equation}
         \label{eq:prox_descent}
         f(\omega_i(z)) - f(z) \geq \frac{1}{2}\sum_{k=0}^i  \alpha_k^i |\nabla f(x_{k})|^2.
    \end{equation}
Notice that $\omega_i(z) = z$ iff $\nabla f(z) = 0$ iff $f(\omega_i(z))=f(z)$.
By \eqref{eq:prox_descent} and \eqref{eq:eta}, we have $f(\omega_i(\gamma(\overline{t})))\geq f(\gamma(\overline{t}))>\max_{\overline{B}_\eta(\overline{x})} f$, so that $|\omega_i(\gamma(\overline{t}))-\overline{x}|>\eta$.
Also, $\nabla f(\overline{x})=0$ and thus $\omega_i(\overline{x})=\overline{x}$. Since $\omega_i$ is continuous, by the intermediate value theorem, there exists $t_i\in [0,\overline{t}]$ such that $|\omega_i(\gamma(t_i))-\overline{x}|=\eta$. 
Let $$a_i=\gamma(t_i)~~~\text{and}~~~b_i=\omega_i(a_i).$$ 
Up to a subsequence, $b_i\to b_\infty\in S_{\eta}(\overline{x})$ as $i\to \infty$. 
For all $i\in\N\cup\{\infty\}$, let $\{x\p i_k\}_{k\in \N}$ be such that
        \begin{align*}
            \left\{
            \begin{array}{ccl}
                x\p i_{k+1} & = & x\p i_k - \alpha_k^i \nabla f(x_k\p i), \\
                x\p i_0 & = & b_i,
            \end{array}
            \right.
        \end{align*}
        and consider the index 
$$\overline{k}_i=\inf\{k\in \mathbb N:f(x_{k+1}^i)< f(\overline x)\}.$$ 
By Lipschitz continuity of $\nabla f$ and $\alpha_k^i\to\alpha_k^\infty:=\alpha_k$, $\{x^i_k\}_{k\in\N}$ converges pointwise to $\{x_k^\infty\}_{k\in\mathbb N}$. Since $b_i=\omega_i(a_i)$, we have $x_{i+1}\p i = a_i$ and $$f(b_i) = f(x_0\p i) \geq f(x_1\p i)\geq \cdots \geq f(x_{i+1}\p i) = f(a_i) > f(\overline{x}).$$
In particular, $f(x_k\p i) \geq f(\overline{x})$ for all $k\in\llbracket 0,i+1\rrbracket$, $i+1\leq \overline{k}_i$, and stability yields $x_0\p i,\hdots,x_{i+1}\p i \in B_{\epsilon}(\overline{x})$. Passing to the limit for fixed $k$ yields $f(x_k\p \infty) \geq f(\overline{x})$, i.e., $\overline{k}_\infty = \infty$. Again by stability, $x^\infty_k\to x_\infty$, $\nabla f(x_\infty)=0$ and $f(x_\infty)=f(\overline x)$. 

We now show that $a_i\to\overline{x}$. This holds because $t_i\to 0$. Indeed, if not, then there is a subsequence (again denoted $\{t_i\}_{i\in \mathbb N}$) such that $t_i\geq \underline{t}$ for all $i\in\N$ for some $\underline{t}>0$. The constant
    $$\zeta = \inf \{|\nabla f(x)| : |x-\overline{x}|\leq \epsilon,~f(x)\geq f(\gamma(\underline{t}))\}$$
    is positive since $f(\overline{x})$ is the sole critical value of $f$ on $\overline{B}_{\epsilon}(\overline{x})$ and $f(\gamma(\underline{t}))>f(\overline{x})$. Together with $f(a_i) = f(\gamma(t_i))\geq f(\gamma(\underline{t}))$ and \eqref{eq:prox_descent}, this yields the contradiction 
    \begin{align*}
        f(b_i)-f(a_i) = f(\omega_i(a_i)) - f(a_i) \geq \frac{1}{2} \sum_{k=0}\p i \alpha_k|\nabla f(x_k\p i)|\p 2 \geq \frac{1}{2} \sum_{k=0}\p i \alpha_k^i\zeta\p 2 \to\infty,
    \end{align*}
where we have used $\alpha_k^i\uparrow \alpha_k$ and monotone convergence theorem to show that $$\sum_{k=0}^i\alpha_k^i\zeta^2\to \zeta^2\sum_{k=0}^\infty\alpha_k=\infty. $$
Given a sequence $\epsilon_j\downarrow 0$, by \cref{lemma:stable_discrete} there exists $\delta_j>0$ satisfying the stability property \eqref{eq:alpha_x}-\eqref{eq:delta_epsilon} around $x_\infty$. Fix $j\in\N$ and let $\tau_j$ be such that $|x^\infty_{\tau_j}-x_\infty|<\delta_j/2$. For all $i\in\N$ large enough, we have $|x^i_{\tau_j}-x_{\tau_j}^\infty|< \delta_j/2$ by pointwise convergence, and thus  
$$|x^i_{\tau_j}-x_\infty|\leq |x^i_{\tau_j}-x_{\tau_j}^\infty|+|x_{\tau_j}^\infty-x_\infty| <\delta_j/2+\delta_j/2=\delta_j.$$ 
In particular, when $\tau_j<i+1\leq \overline{k}_i$, by stability $|x_{i+1}\p i-x_\infty| =|a_i-x_\infty|<\epsilon_j$. Passing to the limit yields $|\overline{x}-x_\infty|<\epsilon_j$. As $j$ was arbitrary, $\overline{x} = x_\infty$, in other words, $x_k\p \infty\to \overline{x}$. The desired sequence is then $\{x_k\}_{k\in\mathbb N}=\{x_k^\infty\}_{k\in\mathbb N }$.
\end{proof}

    Note that the upper bound $1/L$ in \cref{thm:attain_discrete} cannot be improved. For example, let
    \[  f(x):=\begin{cases}
        \frac12 x^2   & \text{if }x\geq 0, \\
        -\frac{1}2{x^2} & \text{if } x<0.
    \end{cases}  \]
    Then $f\in C^1(\R)$ and $\nabla f$ is Lipschitz continuous with modulus $1$. However, gradient descent with constant step size $\alpha>1$ won't converge to $0$ unless starting at $0$.

     It would be desirable to extend \cref{thm:attain_discrete} to locally Lipschitz definable functions. However, that would require extending the stability result in \cref{lemma:stable_discrete}. This likely requires a more careful choice of step sizes, as suggested by the recent preprint \cite{lai2025diameter}. It guarantees convergence of bounded iterates of the subgradient method applied to Lipschitz continuous definable functions if the step sizes are $\alpha_k\approx 1/k$. Here, we present a proof for the reachability result of subgradient method with constant step size for Lipschitz definable functions. The case for variable step sizes is left for a future research direction.

The next lemma is a generalization of \cref{lemma:stable_discrete} with essentially the same proof.
\begin{lemma}
    \label{lemma:length_nonsmooth}
    Let $f:\mathbb{R}^n \rightarrow \mathbb{R}$ be locally Lipschitz definable and $\overline{x}\in \R\p n$. For all $\epsilon>0$, there exist $\delta,\rho>0$ such that, if $\{\alpha_k\}_{k\in \N}\subseteq (0,\infty)$ and $\{x_k\}_{k\in\N}\subseteq\R\p n$ are such that
$$\sum_{k=0}\p {\infty} \alpha_k = \infty,~~~ \forall k\in \N, ~~ x_{k+1} = x_k - \alpha_k g_k,~~g_k\in\cp f(x_k),~~f(x_{k+1})\leq f(x_k)-\rho \alpha_k|g_k|^2,$$
then
$$x_0 \in B_\delta(\overline{x})\cap [f\geq f(\overline{x})] ~~~\implies ~~~ \forall k\in\llbracket 1,\overline{k}\rrbracket\setminus\{\infty\},~~~ x_k\in B_\epsilon(\overline{x})$$
where $\overline{k}=\inf\{k\in \mathbb{N}:f(x_{k+1})< f(\overline x)\}$. In particular, if $\overline{k}=\infty$, then $x_k\to x_\infty$ where $0\in \cp f(x_{\infty})$ and $f(x_\infty)=f(\overline{x})$.
\end{lemma}

We are now ready to prove our second main result.

\begin{proof}[Proof of \cref{thm:attain_nonsmooth}]
   Without loss of generality, $\overline{x}=0$ and $f(0)=0$. Define $\Phi(t)=\mathrm{argmax}_{x\in \overline{B}_t(0)}f(x)$, $g(t)=\sup_{x\in \overline{B}_t(0)}f(x)$. Since $\Phi$ is definable, we know that there exists a definable choice of $\Phi$ such that $\phi(t)\in \Phi(t)$ for all $t\in \R$. Select $\delta>0$ such that $0$ is the sole Clarke critical value of $f$ on $\overline{B}_\delta(0)$. Then it is clear that $|\phi(t)|^2=t^2$ for all $t\in [0,\delta]$. Using monotonicity lemma, we may assume that $\phi\in C^2(0,\delta]$ by reducing $\delta$ if necessary. Using the first-order optimality condition, we see that 
    \begin{align*}
         0&\in \partial (-f+\delta_{\overline{B}_{t}(0)})(\phi(t))\overset{\rm (a)}{\subseteq} \partial(-f)(\phi(t))+  \partial \delta_{\overline B_{t}(0)}(\phi(t))\\
         &\overset{\rm (b)}{\subseteq}  \overline{\partial}(-f)(\phi(t)) +  \R_+ \phi(t) =-\overline{\partial}f(\phi(t)) +\R_+\phi(t),  
    \end{align*}
    where in (a) we have used the sum rule for subdifferential \cite[Corollary 10.9]{rockafellar2009variational}, in (b) we have used the fact that the subdifferential is contained in the Clarke subdifferential and \cite[Theorem 9.61]{rockafellar2009variational} to compute $\partial \delta_{\overline B_{t}(0)}(\phi(t))=\R_+\phi(t)$, and in (c) we have used the chain rule of Clarke subdifferential for scalar product \cite[Proposition 2.3.1]{clarke1990}. This implies that $A(t):=\{a\in \R_+:~a\phi(t)\in \overline{\partial}f(\phi(t))\}$ is nonempty for all $t\in [0,\delta]$. In particular, we can select a definable choice $a(t)\in A(t)$ such that $a:[0,\delta]\to \R_+$ is also definable. Using monotonicity lemma, we may assume that $a$ is monotone and continuous on $(0,\delta]$ by reducing $\delta$ if necessary. If $a(t)\to \infty$ as $t\to 0$, then $a$ must be monotonically decreasing on $(0,\delta]$. We select $\overline{\alpha}=1/(2a(\delta))$. In this case, by continuity,
    there exists some $t_0\in (0,\delta]$ such that $\alpha=1/t_0$, which means that $1/\alpha\phi(t_0)\in \partial \phi(t_0)$, and hence 
    \[    x_0=\phi(t_0),~x_1=0=\phi(t_0)-\alpha/\alpha\phi(t_0)\in x_0-\alpha\overline{\partial}f(x_0),~x_k=0~~\forall k\geq 1       \]
    is a Clarke subgradient trajectory of $f$ with step size $\alpha$. In the following we assume $\lim_{t\to 0}a(t)<\infty$, in which case $L=\sup_{t\in (0,\delta]}a(t) <\infty$. Noticing that $|\phi(t)|^2=t^2$, by differentiating this equality, we have $\langle \phi'(t),\phi(t) \rangle=t$ for all $t\in (0,\delta]$. Using \cite[Corollary 5.4]{drusvyatskiy2015curves}, we have 
    \begin{equation}
        \label{upper_1}
        \begin{aligned}
                f(\phi(t))&=\int_0^t\langle \overline{\partial}f(\phi(s)),\phi'(s) \rangle ds= \int_0^t a(s)\langle \phi(s),\phi'(s) \rangle ds \\
                &=\int_0^t a(s)sds \leq \frac{L}{2}t^2,~\forall t\in [0,\delta].
        \end{aligned}
    \end{equation}
    We take $\overline{\alpha}=1/(2L)$. Define the next set-valued mapping:
    \[     P_{\alpha}(x):=\mathrm{argmin}_{y} -\alpha f(y)+\delta_{\overline{B}_\delta(0)}(y)+\frac{1}{2} |y-x|^2.        \]
    Since the function $-\alpha f+\delta_{\overline{B}_\delta(0)}$ is lsc, proper and prox-bounded, we know that $P_\alpha$ is outer semicontinuous\footnote{For locally bounded set-valued mapping, this is equivalent to upper semicontinuity, e.g., see \cite[Theorem 5.19]{rockafellar2009variational}.} by \cite[Example 5.23(b)]{rockafellar2009variational}. Moreover, \eqref{upper_1} gives that $0=P_\alpha(0)$ for all $\alpha<1/L$. For any $x\in \overline{B}_\delta(0)$ and $y\in P_{\alpha}(x)$, we have 
    \[   -\frac{L\alpha}{2}|y^2|+\frac{1}{2}|y-x|^2  \leq -\alpha f(y)+\frac{1}{2} |y-x|^2\leq      \frac{1}{2}|x|^2,  \]
    which implies 
    \[\frac{1-L\alpha}{2}|y|^2\leq   \langle  x,y\rangle \leq |x||y| , \]
    and then $|y|\leq \frac{2}{1-L\alpha}|x|$. Therefore, we select $t_i\to 0$, and $a_i=\phi(t_i)$. Select $\eta\in (0,\delta)$ such that $\eta<\frac{1-L\overline{\alpha}}{2}\delta$. For each $i\in \mathbb N$, we define a sequence $x_0^i=a_i$ and $x^i_{k+1}\in P_\alpha(x_k^i)$, and $\ell_i=\inf\{k: x_{k}^i\notin B_{\eta}(0)\}$. First, we show that $\ell_i<\infty $ for all $i\in \mathbb N$. Define $\zeta_i=\inf\{ d(0,\overline{\partial} f(x)),~ x\in \overline{B}_\delta(0):~f(x)\geq f(\phi(t_i)) \}$, which is positive since $0$ is the sole Clarke critical value of $f$ on $\overline{B}_\delta(0)$. if $\ell_i=\infty $, then we have 
    \[    f(x_{k+1}^i)-f(x_0^i)\geq \sum_{j=0}^{k} \frac{1}{\alpha}|x_{j}^i-x_{j+1}^i|^2\geq \alpha\sum_{j=0}^k d(0,\partial f(x_{j+1}^i))^2 \geq (k+1)\alpha \zeta_i^2\to \infty.         \]
    Next, we argue that $\ell_i\to\infty$, this is clear since $|x_{k+1}^i|\leq \frac{2}{1-L\alpha}|x_k^i|$ and $b_i=x_0^i=\phi(t_i)\to 0$. Moreover, since $|x_{\ell_i-1}^i|<\eta$, we have $|b_i|=|x^i_{\ell_i}|\in [\eta, \frac{2\eta}{1-L\alpha})$. Reverse the order, we define 
    \begin{align*}
        \begin{cases}
            x_0^i=b_i,\\
            x_{k+1}^i= x_k^i-\alpha g_k^i,~~g_k^i\in \overline{\partial} f(x^k_i),~~\forall k\in \llbracket 0, \ell_{i}-1\rrbracket \\
            x_{k}^i=a_i=\phi(t_i),~~\forall k>\ell_i.
        \end{cases}
    \end{align*}
    Define $S_i=\{x_k^i\}_{k\in \mathbb N}\in (\overline{B}_\delta(0))^{\mathbb N}$. Using the Tychonoff theorem \cite[Theorem 37.1]{munkres2000topology}, we know that $(\overline{B}_\delta(0))^{\mathbb N}$ is a compact space under the product topology and is first-countable, so $S_i$ has a convergent subsequence. Passing to a subsequence if necessary, we may assume that $S_i\to S=\{x_k\}_{k\in \mathbb N}$ pointwisely. Using the upper semicontinuity of the Clarke subgradient, we know that for each $k\in \mathbb N$, there exists $g_k\in \overline{\partial} f(x_k)$ such that $x_{k+1}=x_k-\alpha g_k$. Moreover, we have $|x_0|\in [\eta, 2\eta/(1-L\alpha)]$. Moreover, from the inequality $f(x_{k+1}^i)\leq f(x_k^i)-\alpha|g_k^i|^2$, passing to the limit and using the continuity of $f$, we have $f(x_{k+1})\leq f(x_k)-\alpha|g_k|^2$. The rest of the proof is a repeated argument as in \cref{thm:attain_discrete} by using \cref{lemma:length_nonsmooth}.
\end{proof}
     \section*{Appendix}

\begin{proof}[Proof of \cref{fact:length}]
\ref{item:continuous}
Since $x(\cdot)$ is absolutely continuous, for all $t \in [0,T)$ we have
$$
    |x(t) - x(0)| = \left| \int_0^t x'(\tau)d\tau \right| \leq \int_0^t |x'(\tau)|d\tau \leq \int_0^{\infty} |x'(\tau)|d\tau.
$$
Hence $x(\cdot)$ is a bounded subgradient trajectory and $T=\infty$ by Proposition \cite[Proposition 2]{josz2023global}. Consider a sequence $t_0,t_1,t_2,\hdots \in \mathbb{R}$ converging to $\infty$. For all $N\in\mathbb{N}$, we have
\begin{align*}
    \sum_{k=0}^{N} |x(t_{k+1}) - x(t_k)| & = \sum_{k=0}^{N} \left|\int_{t_k}^{t_{k+1}} x'(t)dt \right| \\
    & \leq \sum_{k=0}^{N} \int_{t_k}^{t_{k+1}} \left| x'(t) \right| dt \\
    & = \int_{t_0}^{t_{N+1}} \left| x'(t) \right| dt \\
    & \leq \int_{t_0}^{\infty} \left| x'(t) \right| dt < \infty.
\end{align*}
Let $p$ and $q$ denote two integers such that $p \geq q \geq 0$. We have
    \begin{align*}
        |x(t_p) - x(t_q)| & = \left| \sum_{k=p}^q x(t_{k+1}) - x(t_k) \right| \\
        & \leq \sum_{k=p}^q |x(t_{k+1}) - x(t_k)| \\
        & \leq \sum_{k=p}^{\infty} |x(t_{k+1}) - x(t_k)| \xrightarrow{p\rightarrow \infty} 0.
    \end{align*}
Thus $x(t_0),x(t_1),x(t_2),\hdots$ is a Cauchy sequence and it must have a limit, say $l$. This limit is independent of the sequence $t_k$; indeed, if another sequence $t'_0,t'_1,t'_2,\hdots$ has a limit $l'$, then $t''_k := t_k$ if $k$ even and $t''_k := t'_k$ if $k$ odd converges towards $l'' = l = l'$. Assume that $x(\cdot)$ does not converge towards $l$. 
Then there exists $\epsilon >0$ such that, for all $\bar{t} \geq 0$, there exists $t \geq \bar{t}$ such that $| x(t) - l| > \epsilon$. 
We may thus recursively define a sequence $t_0,t_1,t_2,\hdots$ converging to $\infty$ such that $\|x(t_k) - l\| > \epsilon$. This yields a contradiction. 

It remains to show that the limit $l$ is a critical point of $f$. Since $x'(t) \in -\overline{\partial} f(x(t))$ for almost every $t > 0$, there exists a subset $S$ of $(0,\infty)$ of Lebesgue measure zero such that $x'(t) \in -\overline{\partial} f(x(t))$ for all $t \in (0,\infty)\setminus S$. If there exists $\epsilon>0$ and $\bar{t} \geq 0$ such that $|x'(t)| \geq \epsilon$ for all $t\geq (\bar{t},\infty) \setminus S$, then $\int_{\bar{t}}^t |x'(\tau)|d\tau \geq \epsilon(t-\bar{t}) \rightarrow \infty$ as $t \rightarrow \infty$, resulting in a contradiction. Hence, for all $\epsilon > 0$ and for all $\bar{t} \geq 0$, there exists $t \in  (\bar{t},\infty) \setminus S$ such that $|x'(t)|< \epsilon$. We may thus recursively define a sequence $t_0,t_1,t_2,\ldots \in (0,\infty) \setminus S$ converging to $\infty$ such that $x'(t_k)$ converges to zero. Since $x(\cdot)$ converges to $l$, $x(t_0),x(t_1),\ldots$ converges to $l$. In addition, since $t_0,t_1,t_2, \ldots \in (0,\infty)\setminus S$, it holds that $x'(t_k) \in -\overline{\partial} f(x(t_k))$ for $k=0,1,2,\hdots$. By virtue of \cite[2.1.5 Proposition (b) p. 29]{clarke1990}, we may take the limit in the inclusion, that is to say, $0 \in \partial f(l)$.

\ref{item:discrete} $\{x_k\}_{k\in \mathbb{N}}$ is a Cauchy sequence because
$$
    |x_p - x_q| \leq \sum_{k=p}^{q-1} |x_{k+1} - x_k| \leq \sum_{k=p}^{\infty} |x_{k+1}-x_k| \xrightarrow{p\rightarrow \infty} 0.
$$
Hence it has a limit $\overline{v}$. Notice that $v_k := -(x_{k+1}-x_k)/\alpha_k \in \partial f(x_k)$. If eventually $\|s_k\|\geq \epsilon$ (say starting at index $k_0$) for some $\epsilon>0$, then
$$
    \infty = \sum_{k=k_0}^{\infty} \alpha_k = \sum_{k=k_0}^{\infty} \frac{|x_{k+1}-x_k|}{|v_k|} \leq \frac{1}{\epsilon} \sum_{k=k_0}^{\infty} |x_{k+1}-x_k| < \infty.
$$
Hence 0 is a limit point of $v_k$ and by \cite[2.1.5 Proposition (d) p. 29]{clarke1990} $0 \in \partial f(\overline{v})$.
\end{proof}

\begin{proof}[Proof of \cref{fact:prox}]
    Since $f$ is $C\p {1,1}_L$, by \cite[Lemma 1.2.4]{nesterov2003introductory} it satisfies
\begin{equation}
    \label{eq:taylor}
    \forall x,y \in \R\p n, ~~~ |f(y)-f(x)-\langle \nabla f(x) , y-x \rangle | \leq L |y-x|\p 2/2.
\end{equation}
The function $g=f+|\cdot-x|\p 2/(2\lambda)$ is strongly convex. Indeed, summing
$$f(y')\geq f(y) + \langle \nabla f(y),y'-y\rangle - \frac{L}{2}|y'-y|\p 2$$
and
\begin{align*}
    \frac{1}{2\lambda}|y'-x|\p 2 & = \frac{1}{2\lambda}|y-x|\p 2 + \frac{1}{\lambda}\langle y-x,y'-y\rangle + \frac{1}{2\lambda}|y'-y|\p 2 \\
    & \geq \frac{1}{2\lambda}|y-x|\p 2 + \frac{1}{\lambda}\langle y-x,y'-y\rangle + \frac{L}{2}|y'-y|\p 2
\end{align*}
yields
$$g_x(y')\geq g_x(y) + \langle \nabla g_x(y),y'-y\rangle + \left(\frac{1}{2\lambda}-\frac{L}{2}\right)|y'-y|\p 2.$$
Thus $g_x$ admits a unique minimizer $y$ characterized by $\nabla g_x(y) = 0$. Letting $y'$ denote the minimizer of $g_{x'}$, we have
$x= y+\lambda\nabla f(y)$ and $x'=y'+\lambda\nabla f(y')$, so that $|x-x'|=|y-y'+\lambda(\nabla f(y)-\nabla f(y'))|\geq |y-y'|-\lambda|\nabla f(y)-\nabla f(y')| \geq (1-L\lambda)|y-y'|.$ Hence $|y-y'|\leq |x-x'|/(1-L\lambda)$, i.e, $P_\lambda f$ is $(1-L\lambda)\p{-1}$-Lipschitz continuous.

Going back to \cref{eq:taylor}, we have
\begin{align*}
    f(y) - f(x) + \frac{1}{2\lambda}|y-x|\p 2 \geq & \langle \nabla f(x) , y-x \rangle + (\lambda\p {-1} -L)|y-x|\p 2/2 \\
    = & (1/2)(\lambda\p {-1} - L)( |y-x|\p 2 - 2(\lambda\p{-1}-L)\p {-1} \langle \nabla f(x) , y-x \rangle) \\
    = & (1/2)(\lambda\p {-1} - L)(|y - x - (\lambda\p{-1}-L)\p {-1}\nabla f(x)|\p 2 \\
    & -|(\lambda\p{-1}-L)\p {-1}\nabla f(x)|\p 2).
\end{align*}
Notice that taking $y=x$ cancels out the left hand side so $x\p + = P_\lambda f(x)$ satisfies
$0 \geq |x\p + - x - (\lambda\p{-1}-L)\p {-1}\nabla f(x)|\p 2 -  |(\lambda\p{-1}-L)\p {-1}\nabla f(x)|\p 2$,
that is to say 
$|x\p + - x| \leq 2 (\lambda\p{-1}-L)\p {-1}|\nabla f(x)|$.
The optimality of $x\p +$ yields $f(x\p +) + |x\p +-x|\p 2/(2\lambda) \leq f(x)$, while the first-order optimality condition reads $x\p + = x - \lambda \nabla f(x\p +)$.
\end{proof}

\paragraph{Funding} This work is funded by NSF EPCN grant 2023032.

\paragraph{Data availability} 
We declare that we have no associated data.
\section*{\textbf{Declarations}}
\paragraph{Conflict of interest} The authors declare that they have no Conflict of interest.

\section*{Acknowledgments}
We thank the reviewers and the associate editor for their valuable feedback. We also thank Nicolas Boumal for pointing us to his blog and related references. 

\bibliographystyle{abbrv}    
\bibliography{references}
\end{document}